\newtheorem{thm}{Theorem}
\newtheorem{prop}[thm]{Proposition}
\def\ve{\varepsilon}
\begin{document}

\title{Two remarks on the Suita conjecture}

\author{Nikolai Nikolov}
\address{Institute of Mathematics and Informatics\\
Bulgarian Academy of Sciences\\
Acad. G. Bonchev Str., Block 8\\
1113 Sofia, Bulgaria}
\email{nik@math.bas.bg}
\address{Faculty of Information Sciences\\
State University of Library Studies
and Information Technologies\\
69A, Shipchenski prohod Str.\\
1574 Sofia, Bulgaria}

\date{}

\begin{abstract} It is shown that the weak multidimensional
Suita conjecture fails for any bounded non-pseudoconvex domain with
$C^{1+\ve}$-smooth boundary. On the other hand, it is proved that
the weak converse to the Suita conjecture holds for any finitely
connected planar domain.
\end{abstract}

\subjclass[2010]{32A25, 32F45, 32U35}

\keywords{Suita conjecture, Bergman kernel, Azukawa metric}

\maketitle

Let $D$ be a domain in $\Bbb C^n.$ Denote by $K_D$ and $A_D$ the
Bergman kernel on the diagonal and the Azukawa metric of $D$ (cf.
\cite{JP}). Let
$$I^A_D(z)=\{X\in\Bbb C^n:A_D(z;X)<1\}$$
be the indicatrix of $A_D$ at $z\in D.$

Z.~Blocki and W.~Zwonek have recently proved the following (see
\cite[Theorem 2]{BZ} and \cite[Theorem 7.5]{B2}).

\begin{thm} If $D$ is a pseudoconvex domain in $\Bbb C^n,$
then $$K_D(z)\ge\frac{1}{\lambda(I^A_D(z))},\quad z\in D.\footnote{If $\lambda(I^A_D(z))=\infty,$
then $K_D(z)=0.$}$$
\end{thm}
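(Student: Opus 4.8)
The plan is to obtain the inequality by combining a sharp $L^2$-estimate for holomorphic functions with a convergence property of the sublevel sets of the pluricomplex Green function. Write $g:=g_D(z,\cdot)$ for the pluricomplex Green function of $D$ with logarithmic pole at $z$; since $D$ is pseudoconvex, $g$ is plurisubharmonic and negative on $D$, so for each $a<0$ the sublevel set $D_a:=\{w\in D:g(w)<a\}$ is a pseudoconvex neighbourhood of $z$, with $D_a\downarrow\{z\}$ as $a\to-\infty$. Recalling the extremal description $1/K_D(z)=\inf\{\,\|f\|_{L^2(D)}^2:f\in\mathcal O(D),\ f(z)=1\,\}$, I would aim to produce, for every $a<0$, a function $f_a\in\mathcal O(D)$ with $f_a(z)=1$ and $\|f_a\|_{L^2(D)}^2\le e^{-2na}\lambda(D_a)$; then, letting $a\to-\infty$ and using that $e^{-2na}\lambda(D_a)\to\lambda(I^A_D(z))$, one obtains $1/K_D(z)\le\lambda(I^A_D(z))$, which is the assertion.

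For the construction of $f_a$ — the core of the argument, and the step that fails when $D$ is not pseudoconvex — I would follow Blocki's treatment of the planar Suita conjecture and seek $f_a=v_a-u_a$, where $v_a$ is a smooth cut-off of the constant $1$, equal to $1$ near $z$ and supported in $D_a=\{g<a\}$, and $u_a$ solves $\bar\partial u_a=\bar\partial v_a$. The solution $u_a$ is to be produced from the H\"ormander $L^2$-estimate in the refined form underlying Blocki's optimal Ohsawa--Takegoshi theorem (later also obtained by Guan and Zhou): one runs the estimate with the plurisubharmonic weight $2ng$, together with an auxiliary bounded plurisubharmonic function comparing $g$ with the constant $a$ — needed because $i\partial\bar\partial g$ is everywhere degenerate — and optimizes over a one-parameter family of such data, which reduces to solving an ordinary differential equation and produces the \emph{sharp} constant. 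Because $e^{-2ng}$ has a non-integrable singularity at $z$ (it behaves like $|w-z|^{-2n}$ there), finiteness of $\int_D|u_a|^2e^{-2ng}$ forces $u_a(z)=0$, so $f_a\in\mathcal O(D)$ and $f_a(z)=1$; and the sharpness of the constant, together with the fact that $g<a$ (hence $e^{2ng}<e^{2na}$) on the support of $v_a$, yields exactly $\|f_a\|_{L^2(D)}^2\le e^{-2na}\lambda(D_a)$. I expect this sharp $L^2$-estimate to be the main obstacle; the rest is comparatively soft.

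It then remains to identify $\lim_{a\to-\infty}e^{-2na}\lambda(D_a)$ with $\lambda(I^A_D(z))$. Rescaling by $T_a(X)=z+e^aX$, which multiplies $2n$-dimensional Lebesgue measure by $e^{2na}$, one gets $e^{-2na}\lambda(D_a)=\lambda\bigl(\{X:g(z+e^aX)-a<0\}\bigr)$, where $g(z+e^aX)-a=g(z+\zeta X)-\log|\zeta|$ with $\zeta=e^a$; so, by the definition of the Azukawa metric, the rescaled sublevel sets converge as $a\to-\infty$ to $I^A_D(z)$. The one technical point requiring care is to upgrade this to convergence of the corresponding Lebesgue measures, since the Azukawa metric is defined by a $\limsup$ rather than a genuine limit; this rests on a known regularity property of the pluricomplex Green function near its pole (cf.\ \cite{JP}, \cite{BZ}). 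Combining the two steps gives $K_D(z)\ge\bigl(e^{-2na}\lambda(D_a)\bigr)^{-1}$ for every $a<0$, and letting $a\to-\infty$ yields $K_D(z)\ge 1/\lambda(I^A_D(z))$. When $\lambda(I^A_D(z))=\infty$ the stated inequality is vacuous, and the footnote's sharper claim $K_D(z)=0$ is an easy separate matter that I would not pursue here — in particular it does not arise for bounded pseudoconvex $D$, for which the Azukawa metric is bounded below by the non-degenerate Carath\'eodory--Reiffen metric, so that $\lambda(I^A_D(z))<\infty$.
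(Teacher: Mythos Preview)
The paper does not give its own proof of Theorem~1; the result is quoted from \cite[Theorem~2]{BZ} and \cite[Theorem~7.5]{B2}. Your sketch is correct and reproduces precisely the argument of those references: Blocki's optimal-constant Ohsawa--Takegoshi estimate yields $K_D(z)\ge e^{2na}/\lambda(\{g_D(z,\cdot)<a\})$ for every $a<0$, and the passage $a\to-\infty$, together with the regularity of the pluricomplex Green function near its pole (due to Zwonek and recalled in \cite{BZ}), identifies the limit of the right-hand side with $1/\lambda(I^A_D(z))$.
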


\bigskip

Theorem 1 for $n=1$ is known as the Suita conjecture (see
\cite{Sui}). The first proof of this conjecture is given in
\cite{B1}.

Theorem 1 can hold for some boun\-ded non-pseudoconvex domains.
To see this, note that if $M$ is a
closed pluripolar subset of a domain $D$ in $\Bbb C^n,$ then
$K_{D\setminus M}=K_D$ and $A_{D\setminus M}=A_D.$

On the other hand, our first remark says that even a weaker version of Theorem 1 fails
for bounded non-pseudoconvex domains with $C^{1+\ve}$-smooth
boundaries.

\begin{prop} Let $D$ is a bounded non-pseudoconvex domain
in $\Bbb C^n$ with $C^{1+\ve}$-smooth boundary ($\ve>0$). Then there
exists a sequence $(z_j)_j\subset D$ such that
$$\lim_{j\to\infty}K_D(z_j)\lambda(I^A_D(z_j))=0.$$
\end{prop}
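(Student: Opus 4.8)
The plan is to pick a boundary point $p$ at which $D$ fails to be pseudoconvex, let $z_j$ tend to $p$ along the inner normal, and show that near $p$ the Bergman kernel stays bounded while the Azukawa indicatrix collapses. Since a bounded non-pseudoconvex domain is not a domain of holomorphy, there are $p\in\partial D$ and a ball $B=B(p,r)$ such that every $f\in\mathcal O(D)$ extends to $\tilde D:=D\cup B$ (here $B\cap D$ is connected for small $r$ because $\partial D\in C^1$). Near $p$ this extension is given by a fixed integral formula over a compact piece of $D\cap B$, so $\|f\|_{L^\infty(\tilde D\setminus D)}\le C\|f\|_{L^2(D)}$; hence the extension operator is bounded from $\mathcal O(D)\cap L^2(D)$ into $\mathcal O(\tilde D)\cap L^2(\tilde D)$, and therefore $K_D(z)\le C'K_{\tilde D}(z)$ for $z\in D$. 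Since $p$ is an interior point of $\tilde D$, this gives $\limsup_{z\to p}K_D(z)\le C'K_{\tilde D}(p)<\infty$. So, setting $z_j:=p+\delta_j\nu_p$ with $\delta_j\downarrow 0$ ($\nu_p$ the unit inner normal, which exists as $\partial D\in C^1$), the sequence $K_D(z_j)$ is bounded.

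Next I would prove $\lambda(I^A_D(z_j))\to 0$. From $D\subset\tilde D$ we get $g_D\ge g_{\tilde D}$, hence $A_D\ge A_{\tilde D}$ and $I^A_D(z_j)\subset I^A_{\tilde D}(z_j)$; as $z_j\to p$ and $p$ is interior to $\tilde D$, the sets $I^A_{\tilde D}(z_j)$ lie in a fixed ball $B(0,R_0)$. The additional input is a lower bound for the Azukawa metric of $D$ in the complex normal direction,
$$A_D(z_j;X)\ \ge\ \frac{c}{\delta_j}\,\bigl|\langle X,\nu_p\rangle\bigr|\qquad(X\in\mathbb C^n),$$
for some $c>0$ and all large $j$. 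Granting this, $I^A_D(z_j)\subset\{X\in B(0,R_0):\ |\langle X,\nu_p\rangle|<\delta_j/c\}$, a slab of thickness $\delta_j/c$ intersected with a fixed ball, so $\lambda(I^A_D(z_j))\le C_0 R_0^{2n-1}\delta_j\to 0$. Combined with the boundedness of $K_D(z_j)$ this yields $K_D(z_j)\lambda(I^A_D(z_j))\to 0$.

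The heart of the matter — and where $C^{1+\ve}$ is essential — is that Azukawa lower bound. Since $\partial D\in C^{1+\ve}$, a defining function $\rho$ satisfies $\rho(w)=\operatorname{Re}P(w)+O(|w-p|^{1+\ve})$ near $p$, with $P$ the complex-linear part; after an affine change of coordinates one may take $P(w)=w_n-p_n$, so that $\{w\in D:|w-p|<t\}\subset\{\operatorname{Re}(w_n-p_n)<C_0 t^{1+\ve}\}$, while $z_j$ lies at distance $\asymp\delta_j$ from the hyperplane $\{\operatorname{Re}(w_n-p_n)=C_0 t^{1+\ve}\}$ as soon as one chooses the scale $t=t_j\asymp\delta_j^{1/(1+\ve)}$ (so the Hölder error $t_j^{1+\ve}\asymp\delta_j$ is only comparable to $\delta_j$, not larger). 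Composing the Green function of the half-plane $\{\operatorname{Re}\zeta<C_0 t_j^{1+\ve}\}$ with the holomorphic map $w\mapsto w_n$ produces, on $D\cap B(p,t_j)$, a negative psh function with a logarithmic pole at $z_j$ of the desired size; patching it against $\log(|w-z_j|/\operatorname{diam}D)$ by a $\max$–construction gives a competitor for $g_D(\cdot,z_j)$ defined on all of $D$, whence $A_D(z_j;\nu_p)\ge c/\delta_j$. The estimate with the factor $|\langle X,\nu_p\rangle|$ follows by running the same construction along the line $z_j+\mathbb C X$.

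The step I expect to be the main obstacle is precisely this globalization: the local half-plane competitor is $\approx 0$ on the transition sphere $\partial B(p,t_j)$, whereas the pole constraint forces any global competitor to be $\le\log|w-z_j|+O(1)$ there, so the patching must be arranged with care (the choice $t_j\asymp\delta_j^{1/(1+\ve)}$ and a quantitative control of the patched function on the transition shell are what make it go through). This is also exactly the point at which the hypothesis matters: if the non-pseudoconvexity of $D$ came only from deleting a closed pluripolar set, that set would be invisible to $g_D$ and $A_D$, the indicatrix would not collapse, and Theorem 1 could still hold — so $C^{1+\ve}$-smoothness of $\partial D$ is what excludes this.
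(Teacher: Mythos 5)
Your overall strategy coincides with the paper's: produce a sequence $z_j\to a\in\partial D$ along which $K_D(z_j)$ stays bounded, and use the $C^{1+\ve}$-smoothness to force a lower bound on $A_D(z_j;\cdot)$ in the complex normal direction strong enough to make $\lambda(I^A_D(z_j))\to 0$. Your first half (Hartogs extension with $L^2$ control of the extension operator, hence $K_D\le C'K_{\tilde D}$ near $p$) is workable, though heavier than necessary: the paper gets a bounded sequence of kernel values in one line by noting that if $K_D\to\infty$ at every boundary point, then $\log K_D$ would be a plurisubharmonic exhaustion of the bounded domain $D$, contradicting non-pseudoconvexity.

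The genuine gap is the normal-direction estimate. The bound you claim, $A_D(z_j;X)\ge c\,|\langle X,\nu_p\rangle|/\delta_j$, is false in general near non-pseudoconvex boundary points: for the shell $\{1<\|z\|<2\}$ at $z=(1+\delta)p$ near the inner sphere, analytic discs can bend around the concave obstacle and the Kobayashi metric in the normal direction is only of order $\delta^{-1/2}$; since $A_D\le\kappa_D$, the Azukawa metric cannot be of order $\delta^{-1}$ there. So the ``main obstacle'' you flag --- globalizing the half-plane competitor --- is not a technical nuisance but an impossibility: once you patch at the only admissible scale $t_j$ with $t_j^{1+\ve}\asymp\delta_j$, the pole coefficient of the glued function degrades, and what survives is precisely
$$c_1A_D(z;X)\ \ge\ \frac{|X_N|}{(d_D(z))^{\frac{\ve}{1+\ve}}}+\|X\|,$$
which is \cite[Proposition 2 (i)]{DNT}, the estimate the paper invokes. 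Crucially, this weaker bound still suffices: it yields $\lambda(I^A_D(z))\le c_2(d_D(z))^{\frac{2\ve}{1+\ve}}\to 0$, and together with the boundedness of $K_D(z_j)$ the product tends to $0$. Your argument is therefore repaired simply by replacing your claimed estimate with the correct exponent (or by citing \cite{DNT}); as written, the key inequality is both unproved and, in the stated form, untrue.
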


\begin{proof} Since $D$ is non-pseudoconvex, we may find a sequence
$(z_j)_j\subset D$ approaching $a\in\partial D$ such that
\begin{equation}\label{ker}
\lim_{j\to\infty}K_D(z_j)<\infty
\end{equation}
(otherwise, $\log K_D$ would be a plurisubharmonic exhaustion
function for $D$). On the other hand, if $\ve\le 1,$ then, by
\cite[Proposition 2 (i)]{DNT}, there exists a constant $c_1>0$ such that
$$c_1A_D(z;X)\ge\frac{|X_N|}{(d_D(z))^{\frac{\ve}{1+\ve}}}+||X||,
\quad z\mbox{ near }a,$$ where $d_D(z)=\mbox{dist}(z,\partial D)$
and $X_N$ is the projection of $X$ on the complex normal to $\partial
D$ at a point $a'$ such that $||z-a'||=d_D(z).$
Thus, one may find a constant $c_2>0$ for which
$$\lambda(I^A_D(z))\le c_2(d_D(z))^{\frac{2\ve}{1+\ve}},\quad z\mbox{ near
}a.$$ This inequality and \eqref{ker} imply the wanted result.
\end{proof}

\begin{prop}\cite[Proposition 4]{BZ} Let $0<r<1$ and $P_r=\{z\in\mathbb
C:r<|z|<1\}.$ Then
$$K_{P_r}(\sqrt r)\ge-\frac{2\log
r}{\pi^2}\cdot\frac{1}{\lambda(I^A_{P_r}(\sqrt r))}.$$
\end{prop}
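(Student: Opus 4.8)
The plan is to restate everything in terms of the (negative) Green function. For a planar domain $D$ and $w\in D$, write $g_D(\cdot,w)$ for the Green function, normalised so that $g_D(z,w)=\log|z-w|+\gamma_D(w)+o(1)$ as $z\to w$, with $\gamma_D(w)$ the Robin constant, and put $c_D(w)=e^{\gamma_D(w)}$. In dimension one the pluricomplex Green function is the ordinary Green function, so $A_D(w;X)=|X|\,c_D(w)$; hence $I^A_D(w)$ is the disc $\{|X|<1/c_D(w)\}$ and $\lambda(I^A_D(w))=\pi/c_D(w)^2$. Thus the assertion is equivalent to
$$K_{P_r}(\sqrt r)\ \ge\ -\frac{2\log r}{\pi^{3}}\,c_{P_r}(\sqrt r)^{2}.$$

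For the left-hand side I would use that the monomials $\{z^{n}:n\in\mathbb Z\}$ form an orthogonal basis of the Bergman space of $P_r$, with all the norms $\|z^{n}\|^{2}_{L^{2}(P_r)}$ given in closed form; in particular $\|z^{-1}\|^{2}_{L^{2}(P_r)}=2\pi\log(1/r)$. Since $K_{P_r}(\sqrt r)=\sum_{n\in\mathbb Z}r^{n}/\|z^{n}\|^{2}_{L^{2}(P_r)}$ is a sum of positive terms, retaining only $n=-1$ already gives $K_{P_r}(\sqrt r)\ge\bigl(2\pi r\log(1/r)\bigr)^{-1}$. It therefore suffices to establish the capacity bound $c_{P_r}(\sqrt r)\le\pi\bigl(2\sqrt r\log(1/r)\bigr)^{-1}$.

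This capacity estimate is the heart of the matter and the step I expect to be the main obstacle. I would normalise by the conformal map $\phi(z)=\log z/\log r$, which carries $P_r$ onto the flat cylinder $R_\beta=\{w:0<\operatorname{Re} w<1\}/(w\sim w+i\beta)$ with $\beta=2\pi/\log(1/r)$, sending $\sqrt r$ to the center $w_0=\tfrac12$. As the Robin constant transforms by $\gamma_{P_r}(\sqrt r)=\log|\phi'(\sqrt r)|+\gamma_{R_\beta}(\tfrac12)$ and $|\phi'(\sqrt r)|=\bigl(\sqrt r\log(1/r)\bigr)^{-1}$, it remains to show $c_{R_\beta}(\tfrac12)\le\tfrac{\pi}{2}$. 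For this I would lift to the vertical strip $S=\{w:0<\operatorname{Re} w<1\}$, which covers $R_\beta$ with deck group generated by $w\mapsto w+i\beta$: the series $\sum_{k\in\mathbb Z}g_S(\cdot,\tfrac12+ik\beta)$ converges locally uniformly (the strip Green function decays exponentially), is $i\beta$-periodic, vanishes on $\partial S$, and has exactly one logarithmic pole modulo the period, hence descends to $g_{R_\beta}(\cdot,w_0)$; matching logarithmic terms at $w_0$ yields $\gamma_{R_\beta}(\tfrac12)=\gamma_S(\tfrac12)+\sum_{k\ne0}g_S(\tfrac12,\tfrac12+ik\beta)<\gamma_S(\tfrac12)$. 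Finally $w\mapsto e^{i\pi w}$ maps $S$ onto the upper half-plane sending $\tfrac12$ to $i$, so $\gamma_S(\tfrac12)=\log\pi+\gamma_{\mathbb H}(i)=\log\pi-\log 2=\log(\pi/2)$, i.e. $c_{R_\beta}(\tfrac12)<\tfrac{\pi}{2}$.

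Combining the three estimates,
$$-\frac{2\log r}{\pi^{2}}\cdot\frac{1}{\lambda(I^A_{P_r}(\sqrt r))}=-\frac{2\log r}{\pi^{3}}\,c_{P_r}(\sqrt r)^{2}<-\frac{2\log r}{\pi^{3}}\cdot\frac{\pi^{2}}{4r\log^{2} r}=\frac{1}{2\pi r\log(1/r)}\le K_{P_r}(\sqrt r),$$
which proves the statement, in fact with strict inequality. I emphasise that the naive monotonicity comparisons are of no use here: $P_r\subset\mathbb D$ bounds $c_{P_r}(\sqrt r)$ from \emph{below}, while comparing with the largest disc inscribed at $\sqrt r$ is far too lossy as $r\to 1$ (there the required inequality $2\log(1/r)\le\pi(1-\sqrt r)$ fails). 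Passing to the cylinder and summing the strip Green function over the fibre of the covering $S\to R_\beta$ is what produces exactly the constant $\tfrac{\pi}{2}$ needed.
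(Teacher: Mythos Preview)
The paper does not prove this proposition; it is quoted from \cite[Proposition~4]{BZ} solely to motivate Proposition~4 below, so there is no in-paper argument to compare against.

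Your proof is correct. Keeping only the $n=-1$ term of the orthogonal expansion gives the lower bound $K_{P_r}(\sqrt r)\ge(2\pi r\log(1/r))^{-1}$, and the substance is the capacity estimate. The covariance $\gamma_{P_r}(\sqrt r)=\log|\phi'(\sqrt r)|+\gamma_{R_\beta}(\tfrac12)$ under the biholomorphism onto the cylinder is the correct transformation rule for the Robin constant, and identifying $g_{R_\beta}$ with the fibre sum $\sum_k g_S(\cdot,\tfrac12+ik\beta)$ is the standard construction of the Green function of a quotient by a cyclic covering group. The exponential decay you invoke is genuine: under $w\mapsto e^{i\pi w}$ one finds $g_S(\tfrac12,\tfrac12+ik\beta)=\log\bigl|(1-e^{-\pi k\beta})/(1+e^{-\pi k\beta})\bigr|$, which is absolutely summable. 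Every $k\ne0$ term is strictly negative, whence $c_{R_\beta}(\tfrac12)<c_S(\tfrac12)=\pi/2$, and your concluding chain of inequalities goes through (with strict inequality, in fact).

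For comparison, the original argument in \cite{BZ} works with exact formulas rather than one-sided bounds: the Bergman kernel of $P_r$ is written out as the full Laurent series, and $c_{P_r}(\sqrt r)$ is expressed through the classical product (theta-type) formula for the Green function of the annulus, after which the stated inequality is read off directly. Your route is more elementary and self-contained, at the price of not recovering the precise asymptotics of $K_{P_r}(\sqrt r)\lambda(I^A_{P_r}(\sqrt r))$ that the explicit computation in \cite{BZ} yields.
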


So, the converse to the Suita conjecture is not true with any
universal constant instead of 1. However, our second remark says
that any finitely connected planar has its own constant.

\begin{prop} For any finitely connected planar domain $D$
there exists a constat $c>0$ such that
$$K_D(z)\le\frac{c}{\lambda(I^A_D(z))},\quad z\in D.$$
\end{prop}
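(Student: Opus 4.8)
The plan is to use the conformal invariance of the quantity $K_D(z)\lambda(I^A_D(z))$ in order to reduce to a model domain with smooth boundary. In dimension one the indicatrix is the disc $I^A_D(z)=\{X\in\Bbb C:|X|<1/A_D(z;1)\}$, so $\lambda(I^A_D(z))=\pi/A_D(z;1)^2$ and the claim reads $\pi K_D(z)\le c\,A_D(z;1)^2$ for $z\in D$. Under a biholomorphism $\phi$ both $K_D(z)$ and $A_D(z;1)^2$ get multiplied by $|\phi'(z)|^2$, so $\pi K_D(z)/A_D(z;1)^2$ is a conformal invariant. Moreover, in a finitely connected domain every single-point component of the complement in $\Bbb C\cup\{\infty\}$ is isolated in the boundary, hence polar, hence removable for $K_D$ and for the Green function, so for $A_D$; filling such points in (using a Möbius change of variables to bring the point $\infty$ into play when needed) changes neither side of the inequality. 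After these reductions one has either $D=\Bbb C$, or $\Bbb C$ with finitely many points deleted, where $K_D\equiv 0$ and there is nothing to prove, or — by the uniformization theorem for finitely connected planar domains, followed by a Möbius normalization — a bounded domain $\Omega$ whose boundary is a finite disjoint union of circles (the unit disc included, for which $c=1$ works). It thus suffices to bound $\pi K_\Omega(z)/A_\Omega(z;1)^2$ on such an $\Omega$.

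On $\Omega$ I would combine two elementary bounds. The sub-mean-value inequality for the subharmonic function $|f|^2$ over the disc $B(z,d_\Omega(z))\subset\Omega$ gives $|f(z)|^2\le(\pi\,d_\Omega(z)^2)^{-1}$ for every holomorphic $f$ on $\Omega$ with $\int_\Omega|f|^2\,dA\le 1$, so
$$K_\Omega(z)\le\frac{1}{\pi\,d_\Omega(z)^2},\qquad z\in\Omega .$$
For a lower bound on the Azukawa metric I would use monotonicity: if $\Omega\subseteq\Omega'$ and both admit a Green function, then $g_\Omega(\cdot\,;z)\ge g_{\Omega'}(\cdot\,;z)$ on $\Omega$, hence $A_\Omega(z;1)\ge A_{\Omega'}(z;1)$. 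With $\Omega'$ a fixed disc containing $\Omega$ this gives $A_\Omega(z;1)\ge c_0>0$ on all of $\Omega$. Near $\partial\Omega$ one can do better: since $\partial\Omega$ is real-analytic there is $\rho>0$ such that every $z$ with $d_\Omega(z)<\rho$ lies outside a closed disc $\overline{B(\eta,\rho)}\subset\Bbb C\setminus\Omega$ tangent to $\partial\Omega$ at the boundary point nearest $z$; then, taking $\Omega'=\Bbb C\setminus\overline{B(\eta,\rho)}$ and computing $A_{\Omega'}$ explicitly (the inversion $w\mapsto\rho/(w-\eta)$ maps $\Omega'$, completed by $\infty$, onto the unit disc, giving $A_{\Omega'}(z;1)=\rho/(|z-\eta|^2-\rho^2)$), and using $|z-\eta|=d_\Omega(z)+\rho$, one obtains
$$A_\Omega(z;1)\ \ge\ \frac{\rho}{d_\Omega(z)\,(d_\Omega(z)+2\rho)}\ \ge\ \frac{1}{3\,d_\Omega(z)}\qquad(d_\Omega(z)<\rho).$$

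Putting the two bounds together, $\pi K_\Omega(z)/A_\Omega(z;1)^2\le 9$ whenever $d_\Omega(z)<\rho$. The complementary set $\{z\in\Omega:d_\Omega(z)\ge\rho\}$ is compact, and there $K_\Omega$ is continuous (in fact real-analytic) while $A_\Omega(\cdot\,;1)\ge c_0>0$, so the ratio is bounded there as well. Hence $\pi K_\Omega/A_\Omega(\cdot\,;1)^2\le c$ on $\Omega$ for a suitable finite $c$, and by conformal invariance and the filling-in reduction the desired inequality follows for $D$.

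The step I expect to be the main obstacle is the uniform near-boundary estimate $A_\Omega(z;1)\ge\mathrm{const}/d_\Omega(z)$, together with verifying that the reduction really puts us in a position to use it — this is precisely why one cannot work directly on $D$. For a general finitely connected domain (a slit domain, or $\Bbb C$ with a short segment removed) no such estimate holds with a rate independent of the domain: for an unbounded representative one has $A_D(z;1)\,d_D(z)\to0$ as $z\to\infty$, while the ratio nonetheless stays bounded there because $K_D$ decays at the matching rate. Passing to the bounded circular model $\Omega$ removes this difficulty, since there the exterior-ball condition and the explicit Azukawa metric of the complement of a disc are available. If one wished for the sharp constant one could instead invoke the boundary asymptotics $K_\Omega(z)\sim(4\pi d_\Omega(z)^2)^{-1}$ and $A_\Omega(z;1)\sim(2\,d_\Omega(z))^{-1}$ as $z\to\partial\Omega$, which show the ratio tends to $1$; for boundedness the elementary estimates above suffice.
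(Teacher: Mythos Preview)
Your argument is correct and follows the same overall plan as the paper: use conformal invariance and removable singularities to pass to a bounded circular domain $\Omega=\Bbb D\setminus E$, show that $K_\Omega(z)\lambda(I^A_\Omega(z))$ stays bounded as $z\to\partial\Omega$, and finish by compactness on the interior. The difference lies in how the near-boundary bound is obtained. The paper quotes the sharp asymptotics from \cite{JN} and \cite[Lemma~20.3.1]{JP}, namely $d_\Omega(z)^2/\lambda(I^A_\Omega(z))\to 1/(4\pi)$ and $K_\Omega(z)d_\Omega(z)^2\to 1/(4\pi)$, so that the product tends to~$1$ at the boundary. You instead give an elementary, self-contained argument: the sub-mean-value inequality for $K_\Omega$ and an exterior-ball comparison for $A_\Omega$ (using the explicit Azukawa metric of the complement of a disc) yield the cruder bound $K_\Omega(z)\lambda(I^A_\Omega(z))\le 9$ near $\partial\Omega$. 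Your final paragraph in fact recovers the paper's route as an alternative. What your approach buys is independence from the cited boundary-behaviour results; what the paper's approach buys is the sharper conclusion that the limiting constant at the boundary is exactly~$1$.
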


\begin{proof} It follows by the removable singularity theorem and the uniformization
theorem that it is enough to consider the case when $D=\Bbb D\setminus E,$ where
$E$ is a union of disjoint closed discs which belong to the open unit disc $\Bbb D.$
Since now $\partial D$ is $C^1$-smooth, then \cite[Proposition 2]{JN} (see also
\cite[Lemma 20.3.1]{JP}) and the remark at the end of \cite{JN} show that
$$\lim_{z\to\partial D}\frac{d^2_D(z)}{\lambda(I^A_D(z))}=
\frac{1}{4\pi}=\lim_{z\to\partial D}K_D(z)d^2_D(z).$$
Hence $$\lim_{z\to\partial D}K_D(z){\lambda(I^A_D(z))}=1$$
which leads to the desired inequality.
\end{proof}

{}

\end{document}